\title{Continuous time integration for changing type systems}
\author{Sebastian Franz\footnote{
          Institute of Scientific Computing, Technische Universit\"at Dresden, Germany.
          \mbox{e-mail}: sebastian.franz@tu-dresden.de}
       }
\date{\today}
\newcommand{\e}{\mathrm{e}}
\newcommand{\I}{\mathcal{I}}
\newcommand{\N}{\mathbb{N}}
\newcommand{\R}{\mathbb{R}}
\newcommand{\U}{\mathcal{U}}
\newcommand{\V}{\mathcal{V}}
\newcommand{\PS}{\mathcal{P}}
\newcommand{\QS}{\mathcal{Q}}
\renewcommand{\H}{\mathbf{H}}
\newcommand{\dt}{\,\mathrm{d}t}
\newcommand{\vx}{\mathbf{x}}
\newcommand*{\dive}{\operatorname{div}}
\newcommand*{\grad}{\operatorname{grad}}
\newcommand*{\laplace}{\operatorname{\Delta}}
\newcommand{\scp}[1]{\left\langle #1 \right\rangle}
\newcommand{\scpr}[1]{\langle #1 \rangle_\rho}
\newcommand{\scprm}[1]{\langle #1 \rangle_{\rho,m}}
\newcommand{\id}[1]{\chi_{#1}}
\newcommand{\norm}[2]{\|{#1}\|_{#2}}
\newcommand{\tnorm}[1]{\left|\!\!\;\left|\!\!\;\left| {#1}
                       \right|\!\!\;\right|\!\!\;\right|}
\renewcommand*\env@matrix[1][r]{\hskip -\arraycolsep
  \let\@ifnextchar\new@ifnextchar
  \array{*\c@MaxMatrixCols #1}}
\theoremstyle{plain}
\newtheorem{theorem}{Theorem}[section]
\newtheorem{lemma}[theorem]{Lemma}
\newtheorem{corollary}[theorem]{Corollary}
\newtheorem{remark}[theorem]{Remark}
\numberwithin{equation}{section}
\begin{document}
  \pagestyle{fancy}
  \maketitle
  \begin{abstract}
    We consider variational time integration using continuous Galerkin Petrov
    methods applied to evolutionary systems of changing type. We prove 
    optimal-order convergence of the error in a cGP-like norm and conclude the paper with 
    some numerical examples and conclusions.
  \end{abstract}

  \textit{AMS subject classification (2010):} 65J08, 65J10, 65M12, 65M60

  \textit{Key words:} evolutionary equations, changing type system, continuous Galerkin Petrov, space-time approach
  
\section{Introduction}
  Let us start with an example, where the type of the problem changes over the spacial domain  
  and has homogeneous Dirichlet boundary conditions. For this purpose 
  let $n\in\{1,2,3\}$ be the spatial dimension and $\Omega\subset \R^n$ be bounded and partitioned 
  into measurable, disjoint sets $\Omega_{\mathrm{ell}},\Omega_{\mathrm{par}}$ and $\Omega_{\mathrm{hyp}}$. 
  In $\Omega_{\mathrm{hyp}}$ a hyperbolic wave equation is given for $U=(U_1,U_2)$
  \begin{align*}
    \partial_t U_1 + \dive(U_2) &= F_1,&
    \partial_t U_2 + \grad(U_1) &= F_2\text{ in }\Omega_{\mathrm{hyp}},
  \intertext{
  with some force term $F=(F_1,F_2)$. We will come to the boundary conditions for the spatial operators in a moment.
  In $\Omega_{\mathrm{par}}$ a parabolic heat equation is given
  }
    \partial_t U_1 + \dive(U_2) &= F_1,&
               U_2 + \grad(U_1) &= F_2\text{ in }\Omega_{\mathrm{par}},
  \intertext{
  and in $\Omega_{\mathrm{ell}}$ an elliptic reaction-diffusion equations completes the setting
  }
    U_1 + \dive(U_2) &= F_1,&
    U_2 + \grad(U_1) &= F_2\text{ in }\Omega_{\mathrm{ell}}.
  \end{align*}
  Each of above equations is also known in their derived second order formulation for $U_1$, namely
  $(\partial_t^2-\laplace)U_1=\partial_t F_1-\dive F_2$ for the wave equation, 
  $(\partial_t -\laplace)U_1=F_1-\dive F_2$ for the heat equation and
  $(1-\laplace)U_1=F_1-\dive F_2$ for the reaction-diffusion equation.\\
  Denoting by $\id{D}$ the characteristic function of a domain $D\subset\Omega$ 
  and defining the linear operators 
  \[
    M_0=\begin{pmatrix}[c]
            \id{\Omega_{\mathrm{hyp}}\cup\Omega_{\mathrm{par}}} & 0\\
            0 &\id{\Omega_{\mathrm{hyp}}}
        \end{pmatrix},\,
    M_1=\begin{pmatrix}[c]
          \id{\Omega_{\mathrm{ell}}} & 0 \\
          0 & \id{\Omega_{\mathrm{par}}\cup\Omega_{\mathrm{ell}}}
        \end{pmatrix}\quad\text{and}\quad
    A=\begin{pmatrix}[c]
          0 & \dive\\
          \grad^\circ &0
       \end{pmatrix},
  \]
  where $^\circ$ denotes the homogeneous Dirichlet boundary conditions w.r.t. $\Omega$,  
  we can write above equations in a condensed way
  \begin{subequations}\label{eq:evo}
  \begin{equation}
    (\partial_tM_0+M_1+A)U=F.
  \end{equation}
  By defining $A$ as above, we have included the boundary conditions at $\partial\Omega$ into $A$.
  All that is left is an initial condition at $t=0$ as we are only interested in $t\geq 0$:
  \begin{equation}
    M_0U(0^+)=M_0U_0.
  \end{equation}
  \end{subequations}
  Now we are left with the question, under which conditions above problem \eqref{eq:evo} has a unique solution.

  In the following we assume $U_0$ in $D(A)$.
  Besides that condition we can draw a condition on the operators from a much more general theory.  
  Most of the classical linear partial differential equations arising in mathematical physics
  can be written in a common operator form. It has been shown in \cite{Picard} that this form
  is an evolutionary problem, given by \eqref{eq:evo},
  where $\partial_t$ stands for the derivative with respect to time, 
  $M_0:\H\to\H$ and $M_1:\H\to\H$ are bounded linear selfadjoint operators on some Hilbert space $\H$,
  $A:D(A)\subset\H\to\H$ is an unbounded skew-selfadjoint operator on $\H$
  and $F$ is a given source term.   
  
  We are interested in a unique solution $U$ of above equation.
  For this purpose let $\rho> 0$ and define the weighted $L^2$-function space 
  \[
    H_\rho(\R;\H):=
      \left\{ 
          f:\R\to \H\,:\, f \mbox{ meas.}, 
          \int_\R \norm{f(t)}{\H}^2 \exp(-2\rho t) \dt<\infty
      \right\}.
  \]
  The space $H_\rho(\R;\H)$ is a Hilbert space endowed with the natural inner 
  product given by 
  \[
    \scpr{f,g}\,:=
    \int_{\R} \scp{f(t),g(t)} \exp(-2\rho t) \dt
  \]
  for all $f,g\in H_\rho(\R;\H)$, where $\scp{f(t),g(t)}$ is the inner product of $\H$ and $\norm{\cdot}{\H}$ 
  its associated norm. We obtain a norm by setting $\norm{f}{\rho}^2:=\scpr{f,f}$.
  The associated weighted $H^k$-function spaces are denoted by $H_\rho^k(\R;\H)$ for $k\in \N$.
  Now from \cite[Thm. (solution theory)]{Picard} it follows: 
  If there exists a $\rho_0>0$ and a $\gamma>0$ such that for all $\rho\geq\rho_0$ and $x\in \H$
  \begin{gather}\label{eq:pos}
    \scp{(\rho M_0+M_1)x,x}\geq \gamma\scp{x,x}=\gamma\norm{x}{\H}^2,
  \end{gather}
  then for all right hand sides $F\in H_{\rho}(\R,\H)$ exists a unique solution $U\in H_{\rho}(\R,\H)$.
  Furthermore, by above condition $\scp{M_0x,x}\geq 0$ follows and there exists a root $M_0^{1/2}$ of $M_0$.
  Note that the theory presented in \cite{Picard} deals with vanishing initial conditions at $t\to-\infty$.

  \begin{corollary}\label{cor:exist}
    Under the conditions \eqref{eq:pos} and
    \begin{subequations}\label{eq:comp}
      \begin{align}
        &F|_{\R_{\geq 0}}\text{ is continuous and }
        F(t)=0,\,t<0,\\
        &U_0\in\text{dom}(A)
      \intertext{and}
        &(M_1+A)U_0=F(0^+)
      \end{align}
    \end{subequations}
    problem \eqref{eq:evo} has a unique solution $U$ with
    \[
      U(0^+)=U_0.
    \]
  \end{corollary}
  \begin{proof}
    Problem \eqref{eq:evo} given as a problem on $\R$ reads
    \[
       (\partial_tM_0+M_1+A)U=F+\delta_0M_0U_0
    \]
    where the initial condition $M_0U(0^+)=M_0U_0$ is included via the delta distribution $\delta_0$ 
    at $t=0$ on the right-hand side.
    
    Let $H_0$ denote the Heaviside function with the jump at $t=0$. 
    We obtain for $U-H_0U_0$ the evolutionary problem
    \begin{align*}
      (\partial_tM_0+M_1+A)(U-H_0U_0)
        &= F-(M_1+A) H_0 U_0=:\tilde F.
    \end{align*}
    By~\eqref{eq:comp} we have $\tilde F(t)=0,\,t<0$, $\tilde F(0)=0$ and $\tilde F$ is continuous.
    Now \cite{TW19} yields that the problem for $U-H_0U_0$ has a unique
    solution in $H_\rho^1(\R,\H)$. Thus $U$ is a unique solution of \eqref{eq:evo} and $U(0^+)=U_0$.
  \end{proof}
  
  In the following we assume conditions \eqref{eq:pos} and \eqref{eq:comp} to be fulfilled. Then
  $U_0$ is an initial data on the whole $\Omega$, explicitly also in the elliptic and parabolic regime. But due to
  the compatibility condition \eqref{eq:comp} it cannot be chosen independently of $F$.

  In \cite{FrTW16} the class of changing type problems problems was investigated numerically using
  a discontinuous Galerkin approach for the discretisation in time. Here we want to apply 
  a continuous approach, namely the continuous Galerkin-Petrov method \cite{Winther81,AM89,Schieweck10,AM15,AM04,AMN11}.
  
  Note that, like in \cite{FrTW16}, we deal in this paper with problems that have a changing type 
  over the given domain and could be rewritten into second order form as shown above. But then transmission
  conditions would need to be stated that are embedded automatically into the first order formulation.
  This is a very useful feature of the general approach and it allows to combine models from 
  different parts of physics into one well-posed problem. We want to emphasise that the time discretisation
  presented and analysed in this paper holds for all problems of above general class of first order problems, 
  only the spatial discretisation has to be adapted to the operator $A$.
  
  For our problem and operator $A$ the Hilbert space $\H$ and $D(A)$ can now be specified to 
  \[
    \H=L^2(\Omega)\otimes(L^2(\Omega))^n
    \quad\text{and}\quad
    D(A)=H_0^1(\Omega)\otimes H_{\dive}(\Omega).
  \]
  
  \begin{remark}
    The solution theory demands $A$ to be skew-selfadjoint which in turn restricts 
    the choice of boundary data. Some simple choices are homogeneous Dirichlet boundary conditions
    on the first component, encoded by $\grad^\circ$ in above operator $A$, homogeneous Neumann boundary
    conditions on the second component, encoded by $\dive^\circ$ or periodic boundary 
    conditions on both components, encoded by $\grad^\#$ and $\dive^\#$.\\
    Inhomogeneous conditions can always be transformed into homogeneous ones by a 
    substitution changing the right hand side of the problem.
  \end{remark}
        
  The paper is organised as follows.
  The precise formulation of the method considered is stated in Section~\ref{sec:method}
  while Section~\ref{sec:exist} deals with the existence of discrete solutions.
  In Section~\ref{sec:error} we present error estimates and finally
  Section~\ref{sec:numerics} gives some numerical examples and conclusions.
  
\section{Numerical method}\label{sec:method}
  The discrete variational form of \eqref{eq:evo} uses a decomposition of $[0,T]$ into $M$
  disjoint intervals $I_m=(t_{m-1},t_m]$ of length $\tau_m=t_m-t_{m-1}$ for $m\in\{1,\dots,M\}$.
  Furthermore let $\Omega$ be discretised into $\Omega_h$ by a regular simplicial mesh that resolves the sets $\Omega_{\mathrm{ell}},\,
  \Omega_{\mathrm{par}}$ and $\Omega_{\mathrm{hyp}}$, i.e. each of these subdomains is a union of mesh cells,
  and let $h$ be the maximal diameter of the cells of $\Omega_h$. Furthermore, let $r,\,k\geq 1$ denote polynomial degrees.
  
  Then the piecewise polynomial function spaces for the trial and test functions resp. are given by
  \begin{align*}
    \U^\tau_h&:=\{u\in H^1_\rho([0,T],\H):u\big|_{I_m}\in\PS_{r}(I_m,V_1\otimes V_2),m\in\{1,\dots,M\}\},\\
    \V^\tau_h&:=\{v\in H_\rho([0,T],\H):  v\big|_{I_m}\in\PS_{r-1}(I_m,V_1\otimes V_2),m\in\{1,\dots,M\}\},
  \end{align*}
  where the spatial spaces are
  \begin{align*}
    V_1&:=\left\{v\in H_0^1(\Omega):\,v|_\sigma\in\PS_k(\sigma)\,\forall \sigma\in\Omega_h\right\},\\
    V_2&:=\left\{w\in H_{\dive}(\Omega):\,w|_\sigma\in RT_{k-1}(\sigma)\,\forall \sigma\in\Omega_h\right\}
  \end{align*}
  and therefore
  \[
    V_1\otimes V_2\subset D(A)\subset\H.
  \]
  Here $\PS_k(\sigma)$ is the space of polynomials of degree up to $k$ on 
  the cell $\sigma$ of $\Omega_h$ and $RT_{k-1}(\sigma)$ is the Raviart-Thomas-space, defined by
  \[
    RT_{k-1}(\sigma)=(\PS_{k-1}(\sigma))^n+\vx\PS_{k-1}(\sigma)\subset\PS_k(\sigma)^n.
  \]
  Note that we retain the regularity in space of the trial functions also for the test functions 
  in order to define a Galerkin method in space.
  Furthermore, if the mesh consists of quadrilateral or hexahedral cells, in above definitions 
  and statements the polynomial space $\PS_k(\sigma)$ can be replaced by a mapped $\QS_k$-space,
  including all polynomials of total degree $k$ over a reference element mapped onto $\sigma$. 
  If the mesh is a combination of both types of cells, a combination of spaces also works with
  a suitable mapping ensuring the continuities.
  
  Let us localise in addition the scalar product in $H_\rho(\R,\H)$
  to the time intervals $I_m$ by
  \[
    \scprm{f,g}\,:=
    \int_{I_m} \scp{f(t),g(t)} \exp(-2\rho t) \dt
  \]  
  and the norm $\norm{f}{\rho,m}^2:=\scprm{f,f}$.
  Then the variational formulation using the continuous Galerkin-Petrov method reads:\\
  Find $U^\tau_h\in\U^\tau_h$ such that for all $V^\tau_h\in\V^\tau_h$ and $m\in\{1,\dots,M\}$
  \begin{subequations}\label{eq:evodisc}
    \begin{align}
      B_m(U^\tau_h,V^\tau_h)
        &:=\scprm{(\partial_t M_0+M_1+A)U^\tau_h,V^\tau_h}=\scprm{F,V^\tau_h},
    \intertext{where}
      U^\tau_h(0)&=\I U_0\label{eq:evodisc2}
    \end{align}
  \end{subequations}
  is the initial value. Here $\I=(\I_1,\I_2)$ denotes the spatial interpolation operator,
  where $\I_1:H_\rho([0,T],H^1(\Omega))\to H_\rho([0,T],V_1)$ is locally the Scott--Zhang interpolant on each cell $\sigma$, 
  see \cite{SZ90} for a precise definition, and
  $\I_2:H_\rho((0,t),H(\dive,\Omega)\cap(L^s(\Omega))^n)\to H_\rho([0,T],V_2)$ with $s>2$ is the standard interpolator 
  defined via moments, see \cite{BF91}. Note that it is appropriate to include the full initial conditions
  into the discrete problem, see Corollary \ref{cor:exist}.
 
\section{Existence of discrete solution}\label{sec:exist}
  Let us start by defining $\Pi_h^\tau$ as the orthogonal $L^2$-projection w.r.t. $\scpr{\cdot,\cdot}$ into the test space $\V^\tau_h$, i.e.
  \begin{gather}\label{eq:Pi_r}
    \scpr{U-\Pi_h^\tau U,W^\tau_h}=0,\quad\text{for all }W^\tau_h\in\V^\tau_h,
  \end{gather}
  $R$ and $N$ as the projectors onto the range and nullspace of $M_0$, resp, 
  and 
  \[
    \tnorm{U^\tau_h}_\rho^2
      :=\frac{1}{2}\norm{M_0^{1/2} U^\tau_h(T)}{\H}^2\e^{-2\rho T}+
        \norm{N U^\tau_h(0)}{\H}^2+
        \gamma\norm{\Pi_h^\tau U^\tau_h}{\rho}^2.
  \]
  \begin{lemma}
    The seminorm $\tnorm{U^\tau_h}_\rho$ is a norm on $\U^\tau_h$.
  \end{lemma}
  \begin{proof}
    With $\U^\tau_h$ being finite we only have to show that $\tnorm{U^\tau_h}_\rho=0$ implies
    $U^\tau_h=0$. Thus, let us assume $\tnorm{U^\tau_h}_\rho=0$. Then it follows immediately
    $\Pi_h^\tau U^\tau_h=0$ and due to continuity, one degree of freedom is left for $U^\tau_h$.
    On each time interval $U^\tau_h$ is a multiple of a weighted Legendre polynomial
    that is orthogonal to $V^\tau_h$ w.r.t. $\scpr{\cdot,\cdot}$. 
    From $\norm{N U^\tau_h(0)}{\H}=0$ we conclude 
    \[
      N U^\tau_h(0)=0
    \]
    and therefore $N U^\tau_h=0$, because the Legendre polynomial is not zero at the left boundary.
    From $\norm{M_0^{1/2} U^\tau_h(T)}{\H}=0$ we have similarly
    \[
      R U^\tau_h(T)=0
    \]
    and therefore $R U^\tau_h=0$, because the Legendre polynomial is not zero at the right boundary.
    With
    \[
      U_h^\tau=RU_h^\tau+NU_h^\tau=0
    \]
    we have the assertion.
  \end{proof}
  \begin{lemma}\label{lem:exist}
    It holds 
    \[
      \frac{1}{2}\norm{M_0^{1/2} U^\tau_h(T)}{\H}^2\e^{-2\rho T}+
      \gamma\norm{\Pi_h^\tau U^\tau_h}{\rho}^2
      \leq
      \sum_{m=1}^M B_m(U^\tau_h,\Pi_h^\tau U^\tau_h)+\frac{1}{2}\norm{M_0^{1/2} \I U_0}{\H}^2.
    \]
  \end{lemma}
  \begin{proof}
    Let us consider any interval $I_m$. Then it holds
    \[
      B_m(U^\tau_h,\Pi_h^\tau U^\tau_h)
        = \scprm{\partial_t M_0 U^\tau_h,U^\tau_h}+\scprm{M_1 \Pi_h^\tau U^\tau_h,\Pi_h^\tau U^\tau_h},
    \]
    where the skew-symmetry of $A$ and the definition of $\Pi_h^\tau$ was used.
    For the first term we apply integration by parts and obtain due to the exponential weight
    \[
      \scprm{\partial_t M_0 U^\tau_h,U^\tau_h}
        =\rho\scprm{M_0 U^\tau_h,U^\tau_h}+\frac{1}{2}\norm{M_0^{1/2} U^\tau_h(t)}{\H}^2\e^{-2\rho t}\big|_{t_{m-1}}^{t_m}.
    \]
    By the $L^2$-orthogonality \eqref{eq:Pi_r} it follows
    \begin{align*}
      \scprm{M_0 U^\tau_h,U^\tau_h}
        &= \scprm{M_0 (U^\tau_h-\Pi_h^\tau U^\tau_h),U^\tau_h\!-\Pi_h^\tau U^\tau_h}+\scprm{M_0 \Pi_h^\tau U^\tau_h,\Pi_h^\tau U^\tau_h}\\
        &\geq \scprm{M_0 \Pi_h^\tau U^\tau_h,\Pi_h^\tau U^\tau_h}
    \end{align*}
    and therefore
    \[
      \scprm{\partial_t M_0 U^\tau_h,U^\tau_h}
        \geq \scprm{\rho M_0 \Pi_h^\tau U^\tau_h,\Pi_h^\tau U^\tau_h}+\frac{1}{2}\norm{M_0^{1/2} U^\tau_h(t)}{\H}^2\e^{-2\rho t}\big|_{t_{m-1}}^{t_m}.
    \]
    With the general existence assumption $\rho M_0+M_1\geq \gamma$ 
    and $M_0\geq 0$ we obtain
    \begin{gather}
      B_m(U^\tau_h,\Pi_h^\tau U^\tau_h)
        \geq \gamma\norm{\Pi_h^\tau U^\tau_h}{\rho,m}^2+
              \frac{1}{2}\norm{M_0^{1/2} U^\tau_h(t)}{\H}^2\e^{-2\rho t}\big|_{t_{m-1}}^{t_{m}}.\label{eq:lowerbound}
    \end{gather}
    Summing over the intervals the statement follows.
  \end{proof}
  
  It follows
  \begin{align*}
    \tnorm{U^\tau_h}_\rho^2
      &\leq \sum_{m=1}^M B_m(U^\tau_h,\Pi_h^\tau U^\tau_h)+\frac{1}{2}\norm{M_0^{1/2} \I U_0}{\H}^2+\norm{N U^\tau_h(0)}{\H}^2\\
      &=\sum_{m=1}^M \scprm{f,\Pi_h^\tau U^\tau_h}+\frac{1}{2}\norm{M_0^{1/2} \I U_0}{\H}^2+\norm{N U^\tau_h(0)}{\H}^2\\
      &\leq \frac{1}{2\gamma}\norm{f}{\rho}^2+\frac{1}{2}\tnorm{U^\tau_h}_\rho^2+\frac{1}{2}\norm{M_0^{1/2} \I U_0}{\H}^2+\frac{1}{2}\norm{N \I U_0}{\H}^2
  \end{align*}
  and therefore
  \begin{gather}\label{eq:stab}
    \tnorm{U^\tau_h}_\rho^2
      \leq \frac{1}{\gamma}\norm{f}{\rho}^2+\norm{M_0^{1/2} \I U_0}{\H}^2+\norm{N \I U_0}{\H}^2.
  \end{gather}
  This shows unique existence and continuous dependence on $f$ and $U_0$ of the discrete solution $U^\tau_h$.

\section{Error-estimation}\label{sec:error}
  Let us start by stating interpolation error estimates.
  
  \subsection*{Interpolation in time}
    Let $P_r:H_\rho^1([0,T],\H)\to H^1_\rho([0,T],\H)$, where $P_r u\big|_{I_m}\in\PS_{r}(I_m,\H)$ for all $m\in\{1,\dots,M\}$,
    be the interpolation operator fulfilling locally 
    for all $m$ and $v\in H_\rho^1([0,T],\H)$
    \begin{align*}
        (P_r v-v)(t_{m-1})&=0,\quad
        (P_r v-v)(t_{m})   =0,\\
        \scprm{P_r v-v,w} &=0\quad\forall w\in\PS_{r-2}(I_m,\H).
    \end{align*}
    Although we have weighted norms and scalar products the standard interpolation error estimates
    \[
      \norm{P_r v-v}{\rho} \leq C \tau^{r+1}\norm{\partial_t^{r+1} v}{\rho}
    \]
    holds for $v\in H^{r+1}_\rho([0,T],\H)$, where here and further on $C>0$ denotes a generic constant
    and $\tau:=\max\{\tau_m\}$.

  \subsection*{Interpolation in space}
    As previously stated we use $\I=(\I_1,\I_2)$ as spatial interpolation operator,
    where the first component $\I_1:H_\rho([0,T],H^1(\Omega))\to H_\rho([0,T],V_1)$ is the Scott--Zhang interpolant, 
    and the second component $\I_2:H_\rho((0,t),H(\dive,\Omega)\cap(L^\sigma(\Omega))^n)\to H_\rho([0,T],V_2)$ 
    with $\sigma>2$ is the standard Raviart-Thomas interpolator. 
    Here it holds for all $v\in H_0^1(\Omega)\cap H^s(\Omega)$, see \cite{SZ90},
    \begin{equation}\label{eq:inter_H1}
      \norm{v-\I_1 v}{0}        \leq C h^s \norm{v}{r},\qquad
      \norm{\grad(v-\I_1 v)}{0} \leq C h^{s-1} \norm{v}{s},
    \end{equation}
    where $1\leq s \leq k+1$, $\norm{v}{s}$ denotes the $H^s(\Omega)$-norm, and for all $q\in H^s(\Omega)$ 
    such that $\dive q\in H^s(\Omega)$, see \cite{BF91}
    \begin{equation}\label{eq:inter_HDiv}
      \norm{q-\I_2 q}{0}        \leq C h^s \norm{q}{s},\qquad
      \norm{\dive(q-\I_2 q)}{0} \leq C h^s \norm{\dive q}{s},
    \end{equation}
    where $1\leq s\leq k$.
  
  \subsection*{Error analysis}
    Note that we do have for all $V^\tau_h\in\V^\tau_h$ the Galerkin orthogonality
    \begin{gather}\label{eq:Galorth}
      B_m(U-U^\tau_h,V^\tau_h)=0
    \end{gather}
    for the solution $U\in H_\rho^1([0,T],\H)$ of \eqref{eq:evo} 
    and $U^\tau_h\in \U^\tau_h$ of \eqref{eq:evodisc}. We now want to estimate the error $U-U^\tau_h$  
    and decompose it into $U-U^\tau_h=\eta+\xi$, where
    \[
      \eta=\eta_1+\eta_2,\quad
      \eta_1=U-P_rU,\quad
      \eta_2=P_r(U-\I U),\quad
      \xi=P_r\I U-U^\tau_h\in\U^\tau_h.
    \]
    Note that with \eqref{eq:evodisc2} it follows
    \[
      \xi(0)=P_r\I U(0)-U^\tau_h(0)=\I U(0)-\I U(0)=0.
    \]
    \begin{lemma}
      It holds for any $m\in\{1,\dots,M\}$ and $V^\tau_h\in\V^\tau_h$
      \begin{gather}\label{eq:errineq}
        \scprm{(\partial_t M_0+M_1+A)\xi,V^\tau_h}
          \leq\left(\norm{(2\rho M_0+M_1)\eta}{\rho,m}+
                    \norm{A\eta}{\rho,m}\right)\norm{V^\tau_h}{\rho,m}.
      \end{gather}      
    \end{lemma}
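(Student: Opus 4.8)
The plan is to use the Galerkin orthogonality \eqref{eq:Galorth} to reduce the estimate for $\xi$ to one involving only the interpolation error $\eta$, and then to estimate that quantity term by term using the defining properties of $P_r$. Since the error decomposes as $\eta+\xi$, \eqref{eq:Galorth} gives
\[
  \scprm{(\partial_t M_0+M_1+A)\xi,V}=-\scprm{(\partial_t M_0+M_1+A)\eta,V}
\]
for all $V\in\V^\tau$; this is well defined because $u\in H_\rho^1([0,T],H)$ is assumed, so that $\eta\in H_\rho^1$ and $\eta(t)\in H$ for every $t$. Hence it suffices to bound the right-hand side above by $(\norm{(2\rho M_0+M_1)\eta}{\rho,m}+\norm{A\eta}{\rho,m})\norm{V}{\rho,m}$.

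The contributions $\scprm{M_1\eta,V}$ and $\scprm{A\eta,V}$ can be handled directly, so the only work is with the term $\scprm{\partial_t M_0\eta,V}$. Here I would integrate by parts in time over $I_m$, keeping the exponential weight, as in the proof of Lemma~\ref{lem:exist}, which yields
\[
  \scprm{\partial_t M_0\eta,V}
    =\scp{M_0\eta(t),V(t)}_H\e^{-2\rho t}\big|_{t_{m-1}}^{t_m}
     -\scprm{M_0\eta,\partial_t V}+2\rho\scprm{M_0\eta,V}.
\]
The boundary term vanishes because the first two defining conditions of $P_r$ give $\eta(t_{m-1})=\eta(t_m)=0$. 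The term $\scprm{M_0\eta,\partial_t V}$ vanishes as well: we have $\partial_t V\in\PS_{r-2}(I_m,H)$, and since $M_0$ is self-adjoint and independent of $t$ one gets $\scprm{M_0\eta,\partial_t V}=\scprm{\eta,M_0\partial_t V}$ with $M_0\partial_t V\in\PS_{r-2}(I_m,H)$, against which $\eta$ is $L^2$-orthogonal by the third defining property of $P_r$ (the case $r=1$ being trivial, since then $\partial_t V=0$).

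What remains is $\scprm{\partial_t M_0\eta,V}=2\rho\scprm{M_0\eta,V}$, hence
\[
  -\scprm{(\partial_t M_0+M_1+A)\eta,V}=-\scprm{(2\rho M_0+M_1)\eta,V}-\scprm{A\eta,V},
\]
and applying the Cauchy--Schwarz inequality with respect to $\scprm{\cdot,\cdot}$ to each of the two summands gives \eqref{eq:errineq}. The only genuinely delicate point — and thus the main obstacle — is verifying that both the boundary contribution and $\scprm{M_0\eta,\partial_t V}$ really drop out: this uses that $\eta$ is continuous and interpolates $u$ at the nodes $t_{m-1},t_m$, and that $M_0$, being a constant self-adjoint multiplication operator, maps $\PS_{r-2}(I_m,H)$ into itself so that the orthogonality property of $P_r$ applies to $M_0\partial_t V$. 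Everything else reduces to the elementary manipulation above.
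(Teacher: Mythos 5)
Your proof is correct and follows essentially the same route as the paper: Galerkin orthogonality to pass from $\xi$ to $\eta$, integration by parts in time with the weight $\e^{-2\rho t}$, vanishing of the boundary term by nodal interpolation and of $\scprm{M_0\eta,\partial_t V}$ by the orthogonality of $P_r$ against $\PS_{r-2}(I_m,H)$, and a final Cauchy--Schwarz. Your explicit remarks on why $M_0\partial_t V\in\PS_{r-2}(I_m,H)$ and on the trivial case $r=1$ only make explicit what the paper leaves implicit.
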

    \begin{proof}
      Using the Galerkin orthogonality \eqref{eq:Galorth} we obtain 
      the error equality
      \[
        \scprm{(\partial_t M_0+M_1+A)\xi,V^\tau_h}
          =-\scprm{(\partial_t M_0+M_1+A)\eta,V^\tau_h}.
      \]
      Using integration by parts and the properties of $P_r$ we obtain for all $w\in\V^\tau_h$ and $v\in H_\rho^1([0,T],\H)$
      \begin{align*}
        \scprm{\partial_tM_0(v-P_r v),w}
           = 2\rho\scprm{M_0(v-P_r v),w}&-\scprm{v-P_r v,\partial_t M_0w}\\&+\scp{v-P_r v,w}\e^{-2\rho t}\big|_{t_{m-1}}^{t_m}\\
           = 2\rho\scprm{M_0(v-P_r v),w}&.
      \end{align*}
      Thus we get the error equation
      \begin{gather}\label{eq:erroreq}
        \scprm{(\partial_t M_0+M_1+A)\xi,V^\tau_h}
          =-\scprm{(2\rho M_0+M_1+A)\eta,V^\tau_h}
      \end{gather}
      from which \eqref{eq:errineq} follows by a Cauchy-Schwarz inequality.      
    \end{proof}
    
    From the error equation \eqref{eq:erroreq} and the stability estimate \eqref{eq:stab} we obtain
    \begin{gather}\label{eq:discrerror}
        \gamma\norm{\Pi_h^\tau \xi}{\rho}^2+
      \frac{1}{2}\norm{M_0^{1/2}\xi(T)}{\H}^2\e^{-2\rho T}
        \leq\frac{1}{\gamma}(\norm{(2\rho M_0+M_1)\eta}{\rho}^2+
                              \norm{A\eta}{\rho}^2)
    \end{gather}
    by substituting $U^\tau_h:=\xi$ and $f:=-(2\rho M_0+M_1+A)\eta$, and noting $\xi(0)=0$.
    
    In order to simplify the representation of the main result, let us abbreviate
    \[
      \H^k:=H^k(\Omega)\otimes (H^k(\Omega))^n
    \]
    and
    \[
      \norm{U}{\H^k,\rho}^2
      :=\int_0^T\norm{U(t)}{k}^2\exp(-2\rho t)\dt.
    \]

    \begin{theorem}\label{thm:cts}
      We assume for the solution $U$ of \eqref{eq:evo} the 
      regularity     
      \[
        U\in H_\rho^{1}([0,T];\H^k)\cap H_\rho^{r+1}([0,T];\H) 
      \]
      as well as 
      \[
        AU\in H_\rho([0,T];   \H^k)\cap H_\rho^{r+1}([0,T];\H).
      \]
      Then we have for the error of the numerical solution $U^\tau_h$ of 
      \eqref{eq:evodisc}
      \begin{align*}
        \tnorm{U-U^\tau_h}_\rho
           \leq C\Big(\quad  &\tau^{r+1}\left( \norm{\partial_t^{r+1}U}{\rho}+\norm{\partial_t^{r+1}AU}{\rho} \right)\\ 
                       +&h^k\left( \norm{U}{\H^k,\rho}+\norm{AU}{\H^k,\rho}+\norm{U(T)}{\H^k}\e^{-\rho T}+\norm{NU_0}{\H^k} \right)\Big).
      \end{align*}
    \end{theorem}
    \begin{proof}
      By the decomposition of the norm and the error we have to estimate
      \begin{align*}
        \norm{\Pi_h^\tau (U-U^\tau_h)}{\rho}
          &\leq \norm{\Pi_h^\tau \eta_1}{\rho}+\norm{\Pi_h^\tau \eta_2}{\rho}+\norm{\Pi_h^\tau \xi}{\rho},\\
        \norm{M_0^{1/2}(U-U^\tau_h)(T)}{H}
          &\leq \norm{M_0^{1/2}\eta_1(T)}{H}+\norm{M_0^{1/2}\eta_2(T)}{H}+\norm{M_0^{1/2}\xi(T)}{H}\\
          &=\norm{M_0^{1/2}\eta_2(T)}{H}+\norm{M_0^{1/2}\xi(T)}{H},\\
        \norm{N(U-U^\tau_h)(0)}{H}
          &= \norm{N(U-\I U)(0)}{H}.
      \end{align*}
      Using above interpolation error estimates we obtain
      \begin{align*}
        \norm{\Pi_h^\tau \eta_1}{\rho}
          &=\norm{\eta_1}{\rho}
          \leq C\tau^{r+1}\norm{\partial_t^{r+1} U}{\rho},\\
        \norm{\Pi_h^\tau \eta_2}{\rho}
          &=\norm{\eta_2}{\rho}
          \leq C \norm{U-\I U}{\rho}
          \leq C h^{k}\norm{U}{\H^k,\rho},\\
        \norm{M_0^{1/2}\eta_2(T)}{H}
          &\leq C h^k \norm{U(T)}{\H^k},\\
        \norm{N(U-\I U)(0)}{H}
          &\leq C h^k \norm{NU_0}{\H^k}.
      \end{align*}
      For the remaining two terms we apply \eqref{eq:discrerror} and obtain
      \begin{align*}
        \norm{\Pi_h^\tau \xi}{\rho}
          &\leq C\left( \tau^{r+1}\norm{\partial_t^{r+1} U}{\rho}+h^{k}\norm{U}{\H^k,\rho}+
                        \tau^{r+1}\norm{\partial_t^{r+1} AU}{\rho}+h^{k}\norm{AU}{\H^k,\rho}\right)
      \end{align*}
      and similarly for $\norm{M_0^{1/2}\xi(T)}{H}$. Combining these results proves the error estimate.
    \end{proof}
  \begin{remark}
    We assumed in Theorem~\ref{thm:cts} slightly higher regularity assumptions on $U$ than actually needed. 
    Instead of assuming $U\in H^1_{\rho}([0,T],\H^k)$ for the point evaluation at $t=T$ 
    the weaker assumption $U\in W^{0,\infty}_\rho([0,T],\H^k)$ suffices.
    But in order to prove that claim from conditions on the right-hand side the easiest 
    way is by proving above regularity and using the Sobolev-embedding.
  \end{remark}
  \begin{remark}
    In this section we presented an error analysis for the fully discrete problem 
    of the changing type system. At the same time it is true for all operators $M_0$ and $M_1$
    fulfilling Assumption \eqref{eq:pos}.
    The analysis can also easily be adapted to general evolutionary problems having a different spatial operator $A$
    by defining suitable discrete spatial function spaces and corresponding interpolation operators, 
    and providing sufficient interpolation error estimates.
%     
%     Another approach is to look first into the semi-discrete problem, where only time 
%     is discretised, similar to \cite{FrTW16}. Here the proof of existence of a unique 
%     solution is more difficult, but can be done in a functional analytic setting. 
%     The theory of \cite{Picard} cannot be applied directly because test and ansatz spaces 
%     are different. Using suitable projections can circumvent this problem and unique 
%     solvability can be shown.
  \end{remark}
  \begin{theorem}\label{thm:cts2}
    In the case of $M_0>0$, e.g. a purely hyperbolic problem, we can also give a convergence result
    in the weighted $L^2$-type $\norm{\cdot}{\rho}$-norm. Under the same conditions as in Theorem~\ref{thm:cts}
    we have
    \begin{align*}
      \norm{U-U^\tau_h}{\rho}
          \leq C\sqrt{1+T}\big[\quad  &\tau^{r+1}\left( \norm{\partial_t^{r+1}U}{\rho}+\norm{\partial_t^{r+1}AU}{\rho} \right)\\ 
                            +&h^k\left( \norm{U}{\H^k,\rho}+\norm{AU}{\H^k,\rho}+\norm{\partial_t U}{\H^k,\rho}+\norm{NU_0}{\H^k} \right)\big].
    \end{align*}    
  \end{theorem}
  \begin{proof}    
    For this result we need
    \begin{itemize}
      \item a local norm equivalence for all $W_h^\tau\in\U_h^\tau$
            \[
              \norm{W_h^\tau}{\rho,m}^2
                \leq C_1\left( \gamma\norm{\Pi_h^\tau W_h^\tau}{\rho,m}^2+\tau_m\norm{M_0^{1/2}W_h^\tau(t_m)}{H}^2\e^{-2\rho t_m} \right)
            \]
            with a constant $C_1$ independent of $\tau_m$ and $W^\tau_h$, that holds true because 
            $\Pi_h^\tau W^\tau_h-W^\tau_h$ is a multiple of a weighted Legendre polynomial 
            of degree $r$, $t_m$ is not a zero of it and the scaling w.r.t. $\tau_m$ of the two terms is the same,
      \item a local estimation of the discrete error $\xi$ with a localisation of the norms to the interval $[0,t_m]$ instead of $[0,T]$
            \begin{align*}
              &\norm{\xi}{\rho,[0,t_m]}^2
               +\frac{1}{2}\norm{M_0^{1/2}\xi(t_m)}{H}^2\e^{-2\rho t_m}\\
              &\leq C\Big(\quad\tau^{2(r+1)}\left( \norm{\partial_t^{r+1}U}{\rho,[0,t_m]}^2+\norm{\partial_t^{r+1}AU}{\rho,[0,t_m]}^2 \right)\\ 
              &\hspace*{0.79cm}+h^{2k}\!\left(\! \norm{U}{\H^k,\rho,[0,t_m]}^2\!+\!\norm{AU}{\H^k,\rho,[0,t_m]}^2\!+\!\norm{U(t_m)}{\H^k}^2\e^{-2\rho T}\!+\!\norm{NU_0}{\H^k}^2\! \right)\!\!\Big),
            \end{align*}
            that follows by the same lines as in Thm.~\ref{thm:cts},
      \item a Sobolev embedding for $t_n<t_m$ and $U\in H^1_\rho([t_n,t_m],\H)$
            \[
              \norm{U(t_m)}{H}^2\e^{-2\rho t_m}
                \leq C_{inv} \left(\frac{1}{t_m-t_n} \norm{U}{\rho,[t_n,t_m]}^2+(t_m-t_n)\norm{\partial_t U}{\rho,[t_n,t_m]}^2 \right)
            \]
            with a constant $C_{inv}$ independent of $U$, $t_n$ and $t_m$.
    \end{itemize}
    Then it follows
    \begin{align*}
      \norm{\xi}{\rho}^2
        &\leq C_1\left( \gamma\norm{\Pi_h^\tau \xi}{\rho}^2+\sum_{m=1}^M\tau_m\norm{M_0^{1/2}\xi(t_m)}{H}^2\e^{-2\rho t_m} \right)\\
        &\leq C_1(1+T)\bigg[\tau^{2(r+1)}\left( \norm{\partial_t^{r+1}U}{\rho}^2+\norm{\partial_t^{r+1}AU}{\rho}^2 \right)\\
        &\hspace*{2.2cm}+h^{2k}\bigg( \left( 1+\frac{C_{inv}}{T}\right) \norm{U}{\H^k,\rho}^2+\norm{AU}{\H^k,\rho}^2\\
        &\hspace*{3.5cm}              +C_{inv}\norm{\partial_t U}{\H^k,\rho}^2
                                      +\norm{NU_0}{\H^k}^2
                               \bigg)
                      \bigg],
    \end{align*}
    where the Sobolev embedding for $\norm{U(T)}{\H^k}\e^{-\rho T}$ uses the whole interval $[0,T]$ and 
    only $[t_{m-1},t_m]$ for $\norm{U(t_m)}{\H^k}\e^{-\rho t_m}$, as well as $\tau_m\leq 1$.
    Together with the interpolation error bound 
    \[
      \norm{\eta}{\rho}
        \leq\norm{\eta_1}{\rho}+\norm{\eta_2}{\rho}
        \leq C\left( \tau^{r+1}\norm{\partial_t^{r+1} U}{\rho}+h^k\norm{U}{\H^k,\rho} \right)
    \]
    the claim follows.
  \end{proof}

\section{Numerical examples}\label{sec:numerics}
  We consider two examples with unknown solutions. Simulations with known smooth solutions 
  were also made and the theoretical orders were observed. The two following examples 
  show a more realistic behaviour in the case of changing type systems. That both examples have initial 
  values zero is not a restriction. We look into the convergence behaviour
  also w.r.t. the weighted $L^2$-norm $\norm{\cdot}{\rho}$ in addition to the $\tnorm{\cdot}_\rho$-norm 
  in order to compare the results with those of the discontinuous Galerkin method from \cite{FrTW16}.
  In the finite discrete setting both norms are equivalent.
  
  All computations were done in the finite-element framework $\mathbb{SOFE}$\footnote{\texttt{github.com/SOFE-Developers/SOFE}}.
  \subsection{1+1d example}
    Let us consider as first example one spatial dimension and combine a hyperbolic and an elliptic region.
    To be more precise, let $\Omega=\left[-\pi,\pi\right]$, $\Omega_{\mathrm{hyp}}= \left[-\pi,0\right]$, 
    and $\Omega_{\mathrm{ell}}=\left[0,\pi\right]$. As final time we set $T=4\pi$. The problem is now given
    by
    \begin{gather}\label{ex:1d}
      \left[
        \partial_t
        \begin{pmatrix}[c]
        \id{\Omega_{\mathrm{hyp}}} & 0\\
        0 & \id{\Omega_{\mathrm{hyp}}}
        \end{pmatrix}
        +\begin{pmatrix}[c]
          \id{\Omega_{\mathrm{ell}}} & 0\\
          0 & \id{\Omega_{\mathrm{ell}}}
        \end{pmatrix}
        +\begin{pmatrix}[c]
          0 & \partial_x\\
          \mathring{\partial_x} & 0
        \end{pmatrix}
      \right]U
        =F
    \end{gather}
    with homogeneous Dirichlet-conditions for the first component of $U:\R\times\R\to\R\times\R$,
    the initial condition $U_0=0$
    and a right-hand side $F(t,x)=(f(t,x),g(t,x))\cdot\id{\geq 0}(t)$, where $\id{\geq 0}(t)$ 
    is the characteristic function of the non-negative time line and
    \begin{align*}
      f(t,x) &= \frac{1}{5}\sin(3t)+\min\{t,\pi\}\cos(3x),\\
      g(t,x) &= \sin(t)\left(1-\frac{x^2}{\pi^2}\right).
    \end{align*}
    Thus, $F$ is continuous on $\R$ and it holds $F(t)=0$ for $t\leq 0$. 
    Therefore, the solution theory of \cite{Picard} gives the existence of a 
    unique solution $U$ that is continuous in time. Figure~\ref{fig:sol1d}
    \begin{figure}
     \begin{center}
        \includegraphics[width=0.35\textwidth]{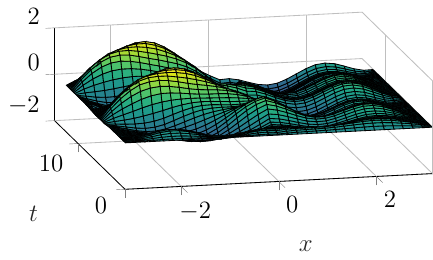}
        \includegraphics[width=0.35\textwidth]{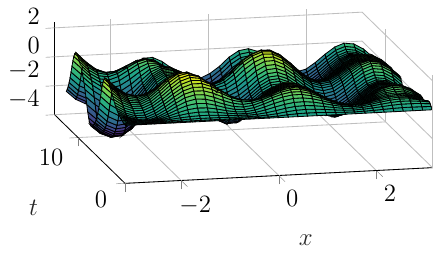}
     \end{center}
      \caption{Solution of problem \eqref{ex:1d}, first component (left) and second component (right)\label{fig:sol1d}}
    \end{figure}
    shows plots of the components of the solution in the domain. Note that 
    the first component has a kink along $x=0$ -- it is continuous but not 
    differentiable in $x$. As mesh we use an equidistant mesh of $N$ cells in $\Omega$
    and $M$ cells in $[0,T]$. In order to calculate the errors we use a reference 
    solution $U_{ref}$ instead of the unknown solution $U$. The reference solution is computed 
    on an $4096\times2048$ mesh with polynomial degrees $k=4$ and $r=3$.
    Table~\ref{tab:ex1d}
    \begin{table}
      \caption{Errors and rates for example \eqref{ex:1d}\label{tab:ex1d}}
      \begin{center}
        \begin{tabular}{rllllll}
          \toprule
          &\multicolumn{4}{c}{cGP-method}
          &\multicolumn{2}{c}{dG-method}\\
          \midrule
          $M=2N$
          &\multicolumn{2}{c}{$\tnorm{U_{ref}-U^\tau_h}_\rho$}
          &\multicolumn{2}{c}{$\norm{U_{ref}-U^\tau_h}{\rho,[0,T]}$}
          &\multicolumn{2}{c}{$\norm{U_{ref}-U^\tau_h}{\rho,[0,T]}$}\\
          \midrule
          \multicolumn{7}{c}{$k=2$, $r=1$}\\
          \midrule
            256 & 2.120e-02 &      & 8.890e-04 &      & 1.808e-04 &     \\
            512 & 5.746e-03 & 1.88 & 3.136e-04 & 1.50 & 7.751e-05 & 1.22\\
           1024 & 1.787e-03 & 1.68 & 1.380e-04 & 1.18 & 3.496e-05 & 1.15\\
           2048 & 7.036e-04 & 1.35 & 6.739e-05 & 1.03 & 1.580e-05 & 1.15\\
          \midrule
          \multicolumn{7}{c}{$k=3$, $r=2$}\\
          \midrule
            256 & 8.806e-04 & 1.05 & 1.187e-04 &      & 6.058e-05 &     \\
            512 & 4.163e-04 & 1.08 & 5.489e-05 & 1.11 & 2.642e-05 & 1.20\\
           1024 & 1.906e-04 & 1.13 & 2.492e-05 & 1.14 & 1.137e-05 & 1.22\\
           2048 & 8.581e-05 & 1.15 & 1.114e-05 & 1.16 & 4.669e-06 & 1.28\\
           \bottomrule
        \end{tabular}        
      \end{center}
    \end{table}
    shows the results for different values of $M$ and $N$ and polynomial degrees $k$ and $r$.
    We coupled $k=r+1$ as the theory gives for smooth $U$ the convergence order $\min\{k,r+1\}$
    if $N$ and $M$ are proportional. We observe for the continuous Galerkin-Petrov method 
    only a convergence rate between 1 and 2 in both norms. Increasing the polynomial
    degree reduces the error, but does not improve the rate much. A reason for this behaviour 
    could be that $U$ is not smooth enough for the error estimates to hold due having 
    jumping coefficients in space and a non-differentiable right hand side. Unfortunately the 
    exact solution to this problem and thus its precise regularity is unknown.\\
    For comparison we also computed approximations with the discontinuous Galerkin method 
    from \cite{FrTW16} that uses globally discontinuous piecewise polynomials of degree $r$ 
    in time and the same approximation in space as the method described in this paper. 
    The errors given in the remaining columns show a similar behaviour 
    with convergence rates between 1 and 2. Nevertheless, the errors are smaller 
    for the discontinuous approach.
    
  \subsection{1+2d example}
    As second example we consider the last example of \cite{FrTW16}.
    Let $T=5.2$, $\Omega=(0,1)^2\subset\R^2$, 
    $\Omega_{\mathrm{hyp}}=\left( \frac{1}{4},\frac{3}{4}\right)^2$ and 
    $\Omega_{\mathrm{ell}}=\Omega\setminus\bar\Omega_{\mathrm{hyp}}$
    The problem is given by
    \begin{gather}\label{ex:2d}
      \left[
      \partial_t
      \begin{pmatrix}
        \id{\Omega_{\mathrm{hyp}}} & 0\\
        0 & \id{\Omega_{\mathrm{hyp}}}
      \end{pmatrix}
      +\begin{pmatrix}
        \id{\Omega_{\mathrm{ell}}} & 0\\
        0 & \id{\Omega_{\mathrm{ell}}}
      \end{pmatrix}
      +\begin{pmatrix}
        0 & \dive \\
        \grad^\circ & 0
      \end{pmatrix}
      \right]U
      =\begin{pmatrix}
        f\\
        0
      \end{pmatrix},
    \end{gather}
    where
    \[
      f(t,\vx)=2\sin(\pi t)\cdot\id{\R_{<1/2}\times\R}(\vx).
    \]
    Figure~\ref{fig:sol2d} 
    \begin{figure}[tb]
      \begin{center}
        \includegraphics[width=0.25\textwidth]{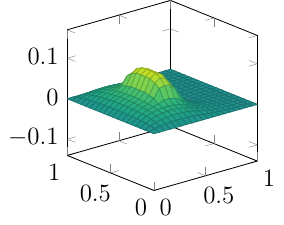}\quad
        \includegraphics[width=0.25\textwidth]{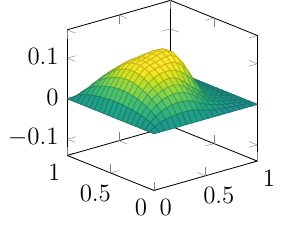}\quad
        \includegraphics[width=0.25\textwidth]{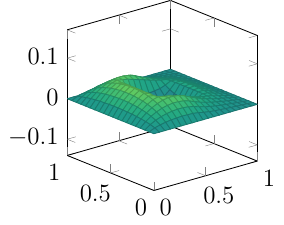}\\
        \includegraphics[width=0.25\textwidth]{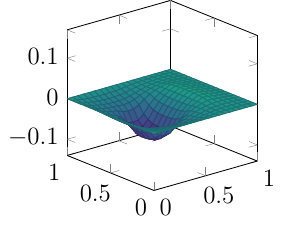}\quad
        \includegraphics[width=0.25\textwidth]{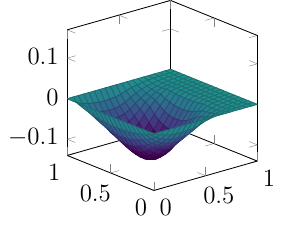}\quad
        \includegraphics[width=0.25\textwidth]{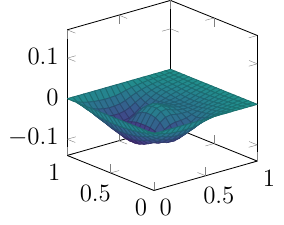}
      \end{center}
      \caption{First component of $U$ at times $t=5\ell/16$ for $\ell\in\{1,\dots, 6\}$ (top left 
               to bottom right) of problem \eqref{ex:2d}}\label{fig:sol2d}
    \end{figure}
    shows some snapshots of the first component of the solution $U:\R\times\R^2\to\R\times\R^2$,
    approximated by a numerical simulation. Again we use equidistant meshes
    with $N$ cells in each dimension of space and $M$ cells in $[0,T]$.
    As reference solution $U_{ref}$ replacing the unknown exact solution we use
    an approximation calculated with $M=192,\,N=96,\,k=3,\,r=2$ and $M=128,\,N=64,\,k=4,\,r=3$, resp.
    Table~\ref{tab:ex2d}
    \begin{table}
      \caption{Errors $\norm{U_{ref}-U^\tau_h}{\rho,[0,T]}$ and rates for example \eqref{ex:2d}\label{tab:ex2d}}
      \begin{center}   
        \begin{tabular}{rllllll}
          \toprule
          &\multicolumn{4}{c}{cGP-method}
          &\multicolumn{2}{c}{dG-method}\\
          \midrule
          $M=2N$
          &\multicolumn{2}{c}{$\tnorm{U_{ref}-U^\tau_h}_\rho$}
          &\multicolumn{2}{c}{$\norm{U_{ref}-U^\tau_h}{\rho,[0,T]}$}
          &\multicolumn{2}{c}{$\norm{U_{ref}-U^\tau_h}{\rho,[0,T]}$}\\
          \midrule
          \multicolumn{7}{c}{$k=2$, $r=1$}\\
          \midrule
          16 & 3.989e-02 &      & 1.961e-02 &      & 7.821e-03 &     \\
          32 & 1.972e-02 & 1.02 & 9.199e-03 & 1.09 & 3.018e-03 & 1.37\\
          64 & 9.435e-03 & 1.06 & 3.751e-03 & 1.29 & 8.813e-04 & 1.78\\
          96 & 5.603e-03 & 1.29 & 1.324e-03 & 1.50 & 2.920e-04 & 1.59\\
          \midrule
          \multicolumn{7}{c}{$k=3$, $r=2$}\\
          \midrule
          16 & 1.041e-02 &      & 5.499e-03 &      & 2.790e-03 &     \\
          32 & 3.689e-03 & 1.50 & 1.435e-03 & 1.94 & 6.385e-04 & 2.13\\
          64 & 1.248e-03 & 1.56 & 4.430e-04 & 1.70 & 2.248e-04 & 1.51\\
          \bottomrule
        \end{tabular}
      \end{center}    
    \end{table}
    shows the results. Similarly to the previous example we do not achieve the optimal 
    convergence order for both methods. Here the data and the right-hand side have jumps 
    along interior lines which reduces the maximum regularity of the solution. 
    Again the discontinuous Galerkin method has smaller errors.
    
\section*{Conclusions}
  The continuous solution of an evolutionary system with continuous right hand side
  can be approximated by several methods. Here we investigated the continuous Galerkin-Petrov
  method, that has optimal convergence order for smooth solutions in the $\tnorm{\cdot}_\rho$-norm. 
  The benefit of the continuous method compared to the discontinuous Galerkin method is the continuity that implies a 
  non-dissipative behaviour. In our examples with unknown solutions, that are probably not smooth enough,
  the discontinuous Galerkin method is slightly better. Furthermore, these examples show that 
  an increase of the polynomial degree in space over 2 and in time over 1 gives no huge benefit. 
  This is different for smooth solutions -- here both methods achieve the theoretical high convergence orders.
  
\section*{Acknowledgement}
  The author would like to thank R. Picard for many helpful discussions on the existence of 
  solution for evolutionary problems and the anonymous reviewer for the helpful comments.
  Also thanks goes to L. Ludwig for providing $\mathbb{SOFE}$.

%----------------------------------------------------------------------
%  Literature
%----------------------------------------------------------------------
\bibliographystyle{plain}
\bibliography{lit}

\end{document}